\numberwithin{equation}{section}
\newtheorem{theorem}{Theorem}[section]
\newtheorem{corollary}[theorem]{Corollary}
\newtheorem{lemma}[theorem]{Lemma}
\newtheorem{proposition}[theorem]{Proposition}
\newtheorem{claim}[theorem]{Claim}
\newtheorem{example}[theorem]{\sl Example}
\theoremstyle{definition}
\newtheorem{remark}[theorem]{Remark}
\newtheorem*{acks}{Acknowledgements}
\newcommand{\lc}{\left\lceil}
\newcommand{\rc}{\right\rceil}
\newcommand{\EE}{{\bf  E}}
\newcommand{\PP}{{\bf  P}}
\newcommand{\Fc}{{\mathcal F}}
\newcommand{\Lc}{\mathcal{L}}
\newcommand{\Leq}{{\,\stackrel{\Lc}{=}\,}}
\newcommand{\Sc}{\mathcal{S}}
\newcommand{\begp}{\begin{proposition}}
\newcommand{\enp}{\end{proposition}}
\newcommand{\begt}{\begin{theorem}}
\newcommand{\ent}{\end{theorem}}
\newcommand{\begl}{\begin{lemma}}
\newcommand{\enl}{\end{lemma}}
\newcommand{\begc}{\begin{corollary}}
\newcommand{\enc}{\end{corollary}}
\newcommand{\begcl}{\begin{claim}}
\newcommand{\encl}{\end{claim}}
\newcommand{\begr}{\begin{remark}}
\newcommand{\enr}{\end{remark}}
\newcommand{\begal}{\begin{algorithm}}
\newcommand{\enal}{\end{algorithm}}
\newcommand{\begd}{\begin{definition}}
\newcommand{\enf}{\end{definition}}
\newcommand{\begx}{\begin{example}}
\newcommand{\enx}{\end{example}}
\newcommand{\bega}{\begin{array}}
\newcommand{\ena}{\end{array}}
\newcommand{\ignore}[1]{}
\newcommand{\sfrac}[2]{{\textstyle\frac{#1}{#2}}}
\def\rompar(#1){\textup(#1\textup)}    
\newcommand{\refS}[1]{Section~\ref{#1}}
\newcommand{\refT}[1]{Theorem~\ref{#1}}
\newcommand{\refP}[1]{Proposition~\ref{#1}}
\newcommand\urladdrx[1]{{\urladdr{\def~{{\tiny$\sim$}}#1}}}
\begin{document}

\author{James Allen Fill}
\thanks{Research of the first author
supported by NSF grants DMS-0104167 and DMS-0406104 and by The Johns Hopkins
University's Acheson~J.~Duncan Fund for the Advancement of Research
in Statistics}

\address{Department of Applied Mathematics and Statistics,
The Johns Hopkins University}
\email{jimfill@jhu.edu} 
\urladdrx{http://www.ams.jhu.edu/~fill/}

\author{Mark Huber}
\thanks{Research of the second author supported by NSF CAREER grant
DMS-05-48153}
\address{Department of Mathematics \& Computer Science, Claremont
McKenna College}
\email{mhuber@cmc.edu}
\urladdrx{http://www.cmc.edu/pages/faculty/MHuber/}

\title[Perfect simulation of Vervaat perpetuities]
{Perfect simulation of Vervaat perpetuities} 

\keywords{Perfect simulation, Markov chain, coupling into and from the past, dominating chain, multigamma coupler, perpetuity, Vervaat perpetuities, Quickselect, Dickman distribution} 

\subjclass[2000]{60J10; 65C05, 68U20, 60E05, 60E15} 

\date{revised January~8, 2010}

\begin{abstract}
We use coupling into and from the past to sample perfectly 
in a simple and provably fast fashion
from the Vervaat family 
of perpetuities.  The family includes the Dickman distribution, 
which arises both in number theory and in the analysis of the 
{\tt Quickselect} algorithm, which was the motivation for our work.
\end{abstract}

\maketitle

\section{Introduction, background, and motivation}
\label{S:intro}

\subsection{Perpetuities in general}
\label{S:general}

Define a \emph{perpetuity} to be a random variable~$Y$ such that
\begin{equation}
\label{perpdef}
Y = W_1 + W_1 W_2 + W_1 W_2 W_3 + \cdots
\end{equation}
for some sequence $W_1, W_2, W_3, \ldots$ of independent and identically 
distributed random variables distributed as~$W$.  Throughout this 
paper we assume \mbox{$W \geq 0$}\ (a.s.)\ and $\EE\,W < 1$ since these simplifying assumptions
are met by the Vervaat perpetuities, which are discussed in \refS{S:Vervaat} and are the focus
of our work.  [Some authors 
define a perpetuity as in~\eqref{perpdef} but with~$1$ added to the 
right-hand side.]
The distribution of such a random variable~$Y$ is also referred to as a 
perpetuity.  General background on perpetuities
is provided in the first paragraph 
of Devroye~\cite[Section~1]{Devroye}.  To avoid repetition, we refer 
the reader to that paper, which also cites literature about perpetuities 
in general and about approximate sampling algorithms.

Within the general framework of~\eqref{perpdef}, the following simple 
observations (with $\Leq$ denoting equality in law, or distribution)
can be made:
\begin{enumerate}
\item The random variable $Y \geq 0$ is finite almost surely; indeed, its 
expected value is finite:
$$
\EE\,Y = \frac{\EE\,W}{1 - \EE\,W} < \infty.
$$
\item The perpetuity satisfies the distributional fixed-point equation
\begin{equation}
\label{fix}
Y \Leq W (1 + Y)
\end{equation}
where, on the right, $W$ and~$Y$ are independent.  [In fact, this fixed-point 
equation characterizes the distribution~\eqref{perpdef}.]
\item If~$W$ has a density~$f_W$, then~$Y$ has a density~$f_Y$ satisfying 
the integral equation
$$
f_Y(y) = \int^{\infty}_0\!(1 + \eta)^{- 1} f_W\Big( 
  \frac{y}{1 + \eta} \Big) f_Y(\eta)\,d\eta, \quad y \geq 0.
$$
\end{enumerate}

\subsection{{\tt Quickselect}, the Dickman distribution, and the 
Vervaat family of perpetuities}
\label{S:Vervaat}

Our interest in perpetuities originated with study of the 
running time of the {\tt Quickselect} 
algorithm (also known as {\tt Find}), due to Hoare~\cite{Hoare}.  
{\tt Quickselect}$(n, m)$ is a recursive algorithm to find the item of 
rank $m \geq 1$ (say, from the bottom) in a list of $n \geq m$ distinct 
numbers (called ``keys'').  First, a ``pivot'' is chosen uniformly at 
random from among the~$n$ keys and every key is compared to it, thereby 
determining its rank (say, $j$) and separating the other keys into two 
groups.  If $j = m$, then {\tt Quickselect}$(n, m)$ returns the pivot.  
If $j > m$, then {\tt Quickselect}$(j - 1, m)$ is applied to the keys 
smaller than the pivot.  If $j < m$, then {\tt Quickselect}$(n - j, m - j)$ 
is applied to the keys larger than the pivot.

Let $C(n, m)$ denote the (random) number of key comparisons required by 
the call 
{\tt Quickselect}$(n, m)$, and write ${\bf 1}(A)$ to mean the indicator
that has value~$1$ if the Boolean expression~$A$ is 
true and~$0$ otherwise.  
Let $J_n$ denote the rank of the first pivot chosen.
Immediately from the description of the algorithm 
we find the distributional recurrence relation
\begin{equation}
\label{recur}
C(n, m) \Leq (n - 1) + {\bf 1}(J_n > m) C(J_n - 1, m) 
  + {\bf 1}(J_n < m) C^*(n - J_n, m - J_n),
\end{equation}
where, on the right, (i) for each fixed~$r$ and~$s$ the random variable 
$C(r, s)$ is distributed as the number of key comparisons required by 
{\tt Quickselect}$(r, s)$, the joint distribution of such random variables 
being irrelevant; (ii) similarly for $C^*(r, s)$; (iii) the collection 
of random variables $C(r, s)$ is independent of the collection of 
$C^*(r, s)$; and (iv) $J_n$ is uniformly distributed on $\{1, \dots, n\}$ 
and is independent of all the $C$'s and $C^*$'s.

The distribution of $C(n, m)$ is not known in closed form for general 
finite~$n$ and~$m$, so we turn to asymptotics.  For any fixed~$m$, 
formal passage to the limit as $n \to \infty$ suggests that
$$
Y(n, m) := \frac{C(n, m)}{n} - 1
$$
has a limiting distribution $\Lc(Y)$ satisfying the fixed-point equation
$$
Y \Leq U (1 + Y).
$$
This is indeed the case, as was shown by Mahmoud \emph{et al.}~\cite{MMS}.
Recalling the characterization~\eqref{fix}, we see that the law of~$Y$ 
is a perpetuity, known as the Dickman distribution.  Curiously, if~$m$ 
is chosen uniformly at random from $\{1, \dots, n\}$ before the algorithm 
is run, then the limiting distribution of $Y(n, m)$ is the convolution 
square of the Dickman distribution, as was also shown in~\cite{MMS}. 

See \cite[Section~2]{HwangTsai} concerning various important settings, 
including number theory (largest prime factors) and combinatorics 
(longest cycles in permutations), in which the Dickman distribution 
arises; also see \cite{HwangTsai} for some basic facts about this 
distribution.  The support of the distribution is $[0, \infty)$, and 
simple integral expressions are known for the characteristic 
and moment-generating functions of this log-concave 
(that is,\ strongly unimodal) 
and infinitely divisible distribution.  
In addition, the Dickman distribution has a continuous 
density~$f$ that is constant with value $e^{- \gamma}$ 
over $(0, 1]$ (here $\gamma$ is Euler's constant).  Over each 
interval $(k, k + 1]$ with~$k$ a positive integer, $f$ is the unique solution 
to the delayed differential equation
\[
f'(y) = - f(y - 1) / y.
\]
Still, no closed form for~$f$ is known.

Some years back, Luc Devroye challenged the first author to find a 
method for simulating \emph{perfectly} from the Dickman distribution 
in finite time, where one assumes that only perfect draws from the 
uniform distribution and basic arithmetic operations such as 
multiplication (with perfect precision) are possible; in particular, 
numerical integration is not allowed.  The problem 
was solved even prior to the writing of Devroye's paper~\cite{Devroye},
 but the second author pointed out extensive simplification that could be 
achieved.  The result is the present collaborative effort, which 
shows how to sample perfectly in a simple, provably fast fashion
from the Dickman distribution, and 
more generally from any member of the Vervaat~\cite{Vervaat} 
family of perpetuities handled by Devroye~\cite{Devroye}.  To 
obtain the Vervaat perpetuities, one for each value of 
$0 < \beta < \infty$, choose 
$W = U^{1 / \beta}$ in~\eqref{fix}.

\subsection{A quick review of Devroye's method}
To compare with the Markov-chain-based method employed in the 
present paper, we first review highlights of Devroye's~\cite{Devroye} 
method for perfect simulation of perpetuities.  Devroye sketches 
a general approach and carries it out successfully for the 
Vervaat family.

The underlying idea of Devroye's approach is simple acceptance-rejection:
\begin{enumerate}
\item[1.] Find explicit $h > 0$ and $1 \leq c < \infty$ so that
$$
f_Y \leq h\mbox{\ everywhere\ \ and\ \ }\int\!h = c.
$$
\item[2.] Generate~$X$ with density $c^{-1} h$.
\item[3.] Accept~$X$ with probability $f_Y(X) / h(X)$; otherwise, reject~$X$ and independently repeat steps 2--3.
\end{enumerate}
Here is how Devroye~\cite{Devroye} carries out these three steps in the case of Vervaat perpetuities.  (It is somewhat surprising how simple the first two steps turn out to be.)
\begin{enumerate}
\item[1.] One can take 
$$
\qquad \qquad h(x) := \min\left[ \beta (\beta + 1) x^{-2}, \beta x^{\beta - 1}\right]\mbox{\ \ and\ \ }c = (\beta + 1)^{(2 \beta + 1) / (\beta + 1)}.
$$
\item[2.] Let~$U_1$ and~$U_2$ be independent uniform$(0, 1)$ random variables and set
$$
X := (\beta + 1)^{1 / (\beta + 1)} U_1^{1 / \beta} U_2^{- 1}.
$$
\item[3.] Since $f_Y$ can't be computed exactly, having generated $X = x$ and an independent uniform random variable $U_3$, one must figure out how to determine whether or not $U_3 \leq f_Y(x) / h(x)$.
\end{enumerate}

Devroye's solution for step~3 is to find explicitly computable approximations $f_n(x)$ to $f_Y(x)$ and explicitly computable bounds $R_n(x)$ so that, for every~$x$,
$$
|f_n(x) - f_Y(x)| \leq R_n(x), \qquad R_n(x) \to 0\mbox{\ as $n \to \infty$}.
$$
Devroye's functions $f_n$ are quite complicated, involving (among other things)
\begin{itemize}
\item the sine integral function $\Sc(t) := \int^t_0\!\frac{\sin s}{s}\,ds$ and approximations to it computable in finite time;
\item explicit computation of the characteristic function $\phi_Y$ of~$Y$;
\item use of quadrature (trapezoidal rule, Simpson's rule) to approximate the density $f_Y$ as the inverse Fourier transform of $\phi_Y$.
\end{itemize}

Devroye proves that the running time of his algorithm is finite almost surely (for any $\beta > 0$), but cannot get finite expected running time for \emph{any} $\beta > 0$ without somewhat sophisticated improvements to his basic algorithm.  He ultimately develops an algorithm so that
$$
\EE\,T < \infty\mbox{\ for $\beta > 5 / 3$}.
$$
More careful analysis would be difficult.  Devroye makes no claim 
``that these methods are inherently practical''.  We do not mean 
to criticize; after all, as Devroye points out, his paper~\cite{Devroye} 
is useful in demonstrating that perfect simulation of perpetuities 
(in finite time) is possible.

The approach we will take is very simple conceptually, very easy 
to code, and (at least for Vervaat pereptuities with~$\beta$ not too large) very fast (provably so). 
While we have hopes that our methodology 
can be generalized to apply to any perpetuity, in this paper we develop the 
details for Vervaat perpetuities for any $\beta > 0$.

\subsection{Our approach}

Briefly, our approach to perfect simulation of a perpetuity from the 
Vervaat family is to use the Markov-chain-based perfect sampling 
algorithm known as coupling into and from the past (CIAFTP) 
(\cite{kendall1995,kendallm2000}) that produces draws exactly from
the stationary distribution of a Markov chain.  CIAFTP requires use of a 
so-called dominating chain, which for us will be simple random walk 
with negative drift on a set of the form
$\{x_0 - 1, x_0, x_0 + 1, x_0 + 2, \ldots\}$, where $x_0$ is a fixed
real number at least~$2$.  In order to handle 
the continuous state space, multigamma coupling ~\cite{murdochg1998}
will be employed.

As we shall see, all of the 
Markov chain steps involved are very easy to simulate, and the expected 
number of steps can be explicitly bounded.  For example, 
our bound is the modest constant~$15$ in the case $\beta = 1$ of the Dickman distribution,
for which the actual expected number of steps appears to be a little larger than~$6$ (consult
the end of \refS{S:time}).

The basic idea is simple.  From the discussion in~\refS{S:general} 
it is clear that the kernel~$K$ given by
\begin{equation}
\label{Kdef}
K(x, \cdot) := \Lc(W (1 + x))
\end{equation}
provides a Markov chain which, for any initial distribution, 
converges in law to the desired stationary distribution $\Lc(Y)$, 
the perpetuity.  

An \emph{update function} or \emph{transition rule} for a Markov chain on state 
space~$\Omega$ with kernel~$K$ is a function
$\phi:\Omega \times \Omega' \rightarrow \Omega$ (for some space $\Omega'$) so that for a 
random variable $W$ with a specified distribution on $\Omega'$ we have
$\Lc(\phi(x, W)) = K(x, \cdot)$ for every $x \in \Omega$.
There are of course many different choices of update function for 
any particular Markov chain.  
To employ coupling from the past (CFTP) for 
a Markov chain, an update function that is monotone suffices.  
Here, an update function $\phi$ is said to be 
\emph{monotone} if whenever $x \preceq y$ with respect to a give partial order on~$\Omega$ we have $\phi(x,w) \preceq \phi(y,w)$ for all
$w \in \Omega'$.

Consider the state space $[0,\infty)$ linearly ordered by $\leq$, and let~$W$ be distributed as 
in~\eqref{fix}.  Then
$$
\phi_{\mbox{\scriptsize natural}}(x, w) = w (1 + x)
$$
provides a natural monotone update function.  If we wish to do 
perfect simulation, it would appear at first that we are ideally 
situated to employ coupling from the past (CFTP).

However, there are two major problems:
\begin{enumerate}
\item The rule $\phi_{\mbox{\scriptsize natural}}$ is \emph{strictly} monotone, and thus no two trajectories begun at distinct states will ever coalesce.
\item The state space has no top element.
\end{enumerate}
It is well known how to overcome these difficulties:
\begin{enumerate}
\item Instead of $\phi_{\mbox{\scriptsize natural}}$, we use a 
multigamma coupler~\cite{murdochg1998}.
\item Instead of CFTP, we use CIAFTP with a dominating chain 
which provides a sort of ``stochastic process top'' to the state 
space \cite{kendall1995,kendallm2000}. 
\end{enumerate}
Our work presents a multigamma coupler for perpetuities that is 
monotone. 
Monotonicity greatly simplifies the use of CFTP~\cite{proppw1996} and CIAFTP. 
Useful monotone couplers can be difficult to find for interesting problems on continuous
state spaces; two very different 
examples of the successful use of monotone couplers on such spaces
can be found in~\cite{wilson2000b} and~\cite{huberw2009}. 

\subsection{Outline}
In \refS{S:coupler} we describe our multigamma coupler; in \refS{S:dom}, 
our dominating chain.  \refS{S:algo} puts everything together and gives a 
complete description of the algorithm, and \refS{S:time} is devoted to 
bounding the running time.  \refS{S:lit} briefly discusses approaches similar to ours
carried out by two other pairs of authors.

\section{The multigamma coupler}
\label{S:coupler}

The multigamma coupler of Murdoch and Green~\cite{murdochg1998} is an
extension of the $\gamma$ coupler described in Lindvall~\cite{lindvall1992}
that couples a single pair of random variables.  An update function can be
thought of as coupling an uncountable number of random variables
simultaneously, thus the need for ``multi"gamma coupling.  The goal
of multigamma coupling is to create an update function whose range
is but a single element with positive probability, in order to use
CIAFTP as described in Section~\ref{S:algo}.

Originally multigamma coupling was described in terms of densities; here we
consider the method from the point of view of the update function.
Suppose that the update function can be written in the form
$$
\phi(x, w) = {\bf 1}(w \in A_x)\phi_1(w) + {\bf 1}(w \notin A_x)\phi_2(x, w).
$$
Now let $A := \cap_x A_x$.  If $W \in A$, then the Markov
chain will transition from~$x$ to $\phi(x, W) = \phi_1(W)$, for all values of~$x$.  

Recall our Markov kernel~\eqref{Kdef}.  In the Vervaat case 
$W = U^{1 / \beta}$ the conditional distribution of $W (1 + x)$ 
given $W \leq 1 / (1 + x)$ is the unconditional distribution of~$W$.  

\begin{proposition} \label{P:coupler}
For any Vervaat perpetuity, the function
$$
\phi(x, w(1), w(2)) := 
  {\bf 1}\left(w(1) \leq \frac{1}{1 + x}\right) w(2) + 
{\bf 1}\left(w(1) > \frac{1}{1 + x}\right) w(1)(1 + x)
$$
is a monotone update function for the kernel~$K$ 
at~\eqref{Kdef}.  More explicitly, $\phi(x, w(1), w(2))$ is 
nondecreasing in~$x \in [0, \infty)$ for each fixed 
$w(1), w(2) \in [0, 1)$, 
and the distributions of $\phi(x, W(1), W(2))$ and $W (1 + x)$ are the same 
when $W(1), W(2)$ are two independent random variables distributed 
as $W = U^{1 / \beta}$, where~$U$ is uniformly distributed on $[0, 1)$.  
\end{proposition}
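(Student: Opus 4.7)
The plan is to verify two facts separately: monotonicity of $\phi(\cdot, w(1), w(2))$ in $x$ for each fixed $w(1), w(2) \in [0,1)$, and the distributional identity $\mathcal{L}(\phi(x, W(1), W(2))) = K(x, \cdot)$ when $W(1), W(2)$ are independent copies of $W = U^{1/\beta}$. The single key tool for the second part is the Vervaat scaling property noted immediately before the proposition statement: conditional on $W \le 1/(1+x)$, the random variable $W(1+x)$ is distributed as $W$ unconditionally, which is immediate from $W$ having CDF $t^\beta$ on $[0,1]$.

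For monotonicity, I would fix $w(1), w(2) \in [0,1)$ and view $\phi$ as a function of $x$ alone. The case $w(1) = 0$ is trivial since then $\phi \equiv w(2)$; otherwise, I would split $[0,\infty)$ at the threshold $x^* := 1/w(1) - 1$, at which the indicator ${\bf 1}(w(1) \le 1/(1+x))$ flips from $1$ to $0$. On $[0, x^*]$ the function is the constant $w(2)$; on $(x^*, \infty)$ it is the strictly increasing affine map $x \mapsto w(1)(1+x)$. The one item needing explicit verification is that $\phi$ does not decrease at the switch, which I expect to be the main subtlety; this holds because $w(1)(1+x^*) = 1 > w(2)$, so the jump is strictly upward.

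For the distributional identity, I would fix $s \ge 0$ and split according to the event $A := \{W(1) \le 1/(1+x)\}$. On $A^c$, $\phi = W(1)(1+x)$ by definition, so $P(\phi \le s, A^c) = P(W(1+x) \le s,\, W > 1/(1+x))$ directly, using a single copy of $W$. On $A$, independence of $W(2)$ from $A$ gives $P(\phi \le s, A) = P(W \le s)\,P(A)$, and the scaling property rewrites this as $P(W(1+x) \le s \mid A)\,P(A) = P(W(1+x) \le s,\, W \le 1/(1+x))$. Summing the two pieces yields $P(W(1+x) \le s)$, which is the CDF of $K(x, \cdot)$ at $s$. Once the scaling observation is exploited, both parts reduce to elementary case analysis with no integration or explicit density manipulation required.
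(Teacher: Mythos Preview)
Your proof is correct and follows essentially the same approach as the paper's own argument: both verify the distributional identity via the scaling property $\mathcal{L}(W(1+x)\mid W\le(1+x)^{-1})=\mathcal{L}(W)$ and establish monotonicity by observing that $\phi(\cdot,w(1),w(2))$ is constant at $w(2)<1$ to the left of $x^*=w(1)^{-1}-1$ and increases linearly starting from value~$1$ to the right. Your version is slightly more careful in two places (handling $w(1)=0$ explicitly and spelling out the CDF computation on $A$ and $A^c$), but the ideas are the same.
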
 

\begin{proof}
Since the conditional distribution of $U(1 + x)^\beta$ given
$U \leq (1 + x)^{-\beta}$ is uniform on $[0,1)$, by taking
$\beta$th roots we find
\[
\Lc(W(1 + x) \mid W \leq (1 + x)^{-1}) = \Lc(W),
\]
independent of $x$, as remarked prior to the statement of the proposition.  
Therefore $\phi(x,W(1),W(2))$ has the same
distribution as $W(1 + x)$.

Also, for fixed $w(1), w(2) \in [0, 1)$ 
the function $\phi(\cdot, w(1), w(2))$ 
has the constant value $w(2) < 1$ over 
$[0, w(1)^{- 1} - 1]$ and increases linearly over 
$[w(1)^{-1} - 1, \infty)$ with value~$1$ at the left endpoint and 
slope~$w(1)$, so monotonicity is also clear.
\end{proof}

\section{A dominating chain}
\label{S:dom}

Dominating chains were introduced by Kendall~\cite{kendall1995} 
(and extended in~\cite{kendallm2000}) 
to extend the use of CFTP to chains on a partially ordered state space with a bottom element but no top element.
A \emph{dominating chain} for a Markov chain $(X_t)$ is another
Markov chain $(D_t)$ that can be coupled with $(X_t)$ 
so that $X_t \leq D_t$ for all $t$.  
In this section we give such a dominating chain
for the Vervaat perpetuity, and in the next section we describe how the
dominated chain can be used with CIAFTP.

Since our multigamma coupler~$\phi$ of \refP{P:coupler} is monotone, 
if we can find an update function $\psi(x, w(1),w(2))$ on a 
subset~$S$ of the positive integers~$x$ such that
\begin{equation}
\label{domineq}
\phi(x, w(1),w(2)) \leq \psi(x, w(1),w(2))
  \mbox{\ \ for all $x \in S$},
\end{equation}
then [with the same driving variables $W(1),W(2)$ 
as in \refP{P:coupler}] 
$\psi$ drives a dominating chain.  This is because if $x \in [0, \infty)$ 
and $y \in S$ and $x \leq y$, then
$$
\phi(x, w(1), w(2)) \leq 
  \phi(y, w(1), w(2)) \leq \psi(y, w(1), w(2)).
$$

We exhibit such a rule~$\psi$ in our next result, \refP{P:dom}.  
It is immediate from the definition of~$\psi$ that the dominating 
chain is just a simple random walk on the integers 
$\{x_0 - 1, x_0, \dots\}$ that moves left with probability $2 / 3$ 
and right with probability $1 / 3$; the walk holds at $x_0 - 1$ 
when a move to $x_0 - 2$ is proposed.  In the definition of~$\psi$, 
note that no use is made of~$w(2)$.

\begin{proposition} \label{P:dom}
Fix $0 < \beta < \infty$.  Let~$\phi$ be the multigamma coupler for 
Vervaat perpetuities described in \refP{P:coupler}.  Define
\begin{equation}
\label{EQN:x_0}
x_0 := \lc \frac{2}{1 - (2 / 3)^{1 / \beta}} \rc - 1 \geq 2
\end{equation}
and, for $x \in S := \{x_0 - 1, x_0, \dots\}$,
$$
\psi(x, w(1), w(2)) 
  := x + {\bf 1}(w(1) > (2 / 3)^{1 / \beta}) 
  -  {\bf 1}(w(1) \leq (2 / 3)^{1 / \beta},\ x \geq x_0).
$$
Then~\eqref{domineq} holds, and so~$\psi$ drives a chain that dominates the $\phi$-chain.\end{proposition}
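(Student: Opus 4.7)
The plan is to verify the inequality~\eqref{domineq} directly by splitting on whether $w(1) > (2/3)^{1/\beta}$ or $w(1) \leq (2/3)^{1/\beta}$, which is precisely the split that defines the two indicators in~$\psi$. Recall from \refP{P:coupler} that $\phi(x, w(1), w(2))$ equals $w(2)$ when $w(1) \leq 1/(1+x)$ and equals $w(1)(1+x)$ otherwise, and that $w(1), w(2) \in [0,1)$. First I would dispatch the ``$\geq 2$'' bound built into~\eqref{EQN:x_0}: since $(2/3)^{1/\beta} > 0$, the quantity $2/(1 - (2/3)^{1/\beta})$ strictly exceeds~$2$, so $x_0 \geq 2$ and hence $x \geq x_0 - 1 \geq 1$ throughout.

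In the ``up-step'' case $w(1) > (2/3)^{1/\beta}$ one has $\psi = x + 1$, and $\phi \leq \psi$ is immediate: either $\phi = w(2) < 1 \leq x + 1$, or $\phi = w(1)(1+x) < 1 + x$.

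The substance of the proposition lies in the ``down-step'' case $w(1) \leq (2/3)^{1/\beta}$, where $\psi = x - \mathbf{1}(x \geq x_0)$. The branch $\phi = w(2)$ is again trivial since $\psi \geq x_0 - 1 \geq 1$. In the remaining branch, $\phi = w(1)(1+x) \leq (2/3)^{1/\beta}(1+x)$, so the proof reduces to showing
\[
(2/3)^{1/\beta}(1 + x) \leq x - \mathbf{1}(x \geq x_0) \quad \text{for every } x \in S.
\]
For $x \geq x_0$ this rearranges to $x \geq (1 + (2/3)^{1/\beta})/(1 - (2/3)^{1/\beta})$, and for $x = x_0 - 1$ it becomes the strictly weaker requirement $x_0 \geq 1/(1 - (2/3)^{1/\beta})$.

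Both reduce to a single consequence of~\eqref{EQN:x_0}: the identity
\[
x_0 + 1 \;=\; \lceil 2/(1 - (2/3)^{1/\beta}) \rceil \;\geq\; 2/(1 - (2/3)^{1/\beta})
\]
rearranges to exactly $x_0 \geq (1 + (2/3)^{1/\beta})/(1 - (2/3)^{1/\beta})$, which is the stronger of the two needed bounds. The only real obstacle in the proof is recognizing that the ``$2$'' and the ceiling in the definition of $x_0$ have been calibrated precisely to secure this down-step inequality; once that is observed, the verification is a short piece of algebra.
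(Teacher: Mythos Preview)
Your proof is correct and follows essentially the same approach as the paper's: both reduce the nontrivial case to the algebraic inequality $(2/3)^{1/\beta}(1+x) \leq x - 1$ for $x \geq x_0$ (equivalently $1 - 2/(x_0+1) \geq (2/3)^{1/\beta}$) and a weaker version at $x = x_0 - 1$, and both extract these from the ceiling in the definition of~$x_0$. The only difference is organizational---you split first on the value of $w(1)$ while the paper splits first on whether $x \geq x_0$ or $x = x_0 - 1$---but the substance is identical.
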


\begin{proof}
To establish~\eqref{domineq}, suppose first that $x \geq x_0$.  
Since 
$$
\phi(x, w(1), w(2)) 
  \leq x + 1\mbox{\ \ and\ \ }
  \psi(x, w(1),w(2)) \in \{x - 1, x + 1\},
$$
we need only show that if 
$\psi(x, w(1), w(2))  
 = x - 1$ [i.e.,\ if $w(1) \leq (2 / 3)^{1 / \beta}$], then 
$\phi(x,w(1), w(2))  \leq x - 1$.  
Indeed, if $w(1) \leq 1 / (1 + x)$, then
$$
\phi(x, w(1), w(2)) ) = w(2) \leq 1 \leq x_0 - 1 \leq x - 1;
$$
and if $w(1) > 1 / (1 + x)$, then
$$
\phi(x,w(1), w(2))  
  = w(1) (1 + x) \leq (2 / 3)^{1 / \beta} (x + 1) \leq x - 1,
$$
where the last inequality holds because
\begin{equation}
\label{x0ineq}
\frac{x - 1}{x + 1} = 1 - \frac{2}{x +1} \geq 1 - \frac{2}{x_0 + 1} \geq \left( \frac{2}{3} \right)^{1 / \beta}.
\end{equation}
[This is how the value of~$x_0$ was chosen in (\ref{EQN:x_0}).]

The proof for $x = x_0 - 1$ is quite similar, so we omit most details.  In place of~\eqref{x0ineq} one needs to establish
$$
1 - \frac{1}{x_0} \geq \left( \frac{2}{3} \right)^{1 / \beta},
$$
but this follows from the last inequality in~\eqref{x0ineq} since $x_0 \geq 2 > 1$.
 
\end{proof}

\section{A perfect sampling algorithm for Vervaat perpetuities}
\label{S:algo}

For an update function $\phi(x,w)$ and random variables 
$W_{-t},\ldots,W_{- 1}$, set 
$$F_{-t}^0(x) = \phi(\phi(\ldots\phi(x,W_{-t}),\ldots,W_{-2}),W_{-1}),$$
so that if $X_{-t} = x$, then $X_0 = F_{-t}^0(x)$.
To use coupling from the past with a dominated
chain for perfect simulation, we require:

\begin{enumerate}
\item{A bottom element~$0$ for the partially ordered state space of the underlying 
chain~$X$, a dominating chain~$D$, and update functions
$\phi(x,w)$ and $\psi(x,w)$ for simulating the underlying chain 
and dominating chain forward in time.}
\item{The ability to generate a variate from the stationary distribution of
the dominating chain.  This variate is used as $D_0$, 
the state of the dominating chain at time 0.}
\item{The ability to simulate the dominating chain backwards in time, so that
$D_{-1},D_{-2},\ldots,D_{-t}$ can be generated given $D_0$.}
\item{The ability to generate i.i.d.\ random variables $W_{-t},W_{-t+1},\ldots,W_{-1}$ 
for the update function
conditioned on the values of $D_0,\ldots,D_{-t}$.}
\item{The ability (given $W_{-t},\ldots,W_{-1}$) to determine whether
$F_{-t}^0(x)$ takes on a single value for all $x \in [0,D_{-t}]$.}  (This detection of coalescence
can be conservative, but we must never claim to detect coalescence when none occurs.)
\end{enumerate}

With these pieces in place, dominated coupling from the 
past~\cite{kendallm2000} is:
\begin{enumerate}
\item[1.] Generate $D_0$ using~(ii).
\item[2.] Let $t' \leftarrow 1$, $t \leftarrow 0$.
\item[3.] Generate $D_{-t - 1},\ldots,D_{-t'}$ using (iii).
\item[4.] Generate $W_{-t'},\ldots,W_{-t-1}$ using $D_{-t'},\ldots,D_{-t}$
and~(iv).
\item[5.] If $F_{-t'}^0([0,D_{-t'}]) = \{x\}$, then output state $x$ as
the random variate and quit.
\item[6.] Else let $t \leftarrow t'$, $t' \leftarrow t' + 1$, and go to step 3.
\end{enumerate}

The update to $t'$ in step 6 can be done in several ways.  For instance,
doubling time rather than incrementing it additively is often done.  For our
chain step 5 only requires checking to see 
whether $W_{-t'} \leq 1 / (1 + D_{-t'})$, 
and so requires constant time for failure and time $t'$ for success.  Thus step~6
as written is efficient.

Because the dominating chain starts at time 0 and is simulated into the past,
and then the underlying chain is (conditionally) simulated forward, 
Wilson~\cite{wilson2000b} referred to this algorithm as coupling into and
from the past (CIAFTP).

Now we develop each of the requirements (i)--(v) for Vervaat perpetuities.  
Requirement~(i) was dealt with in Section~\ref{S:coupler}, where it was 
noted that the dominating chain is a simple
random walk with probability $2/3$ of moving down one unit towards~$0$
and probability $1/3$ of moving up one unit.
The chain has a partially absorbing barrier at $x_0 - 1$, 
so that if the chain is at state $x_0 - 1$ and tries
to move down it stays in the same position.

The stationary distribution of the dominating random walk
can be explicitly calculated and is a shifted geometric random
distribution; requirement~(ii) can be satisfied by letting $D_0$
be $x_0 - 2 + G$ where $G \in \{1, 2, \dots\}$ has the Geometric distribution
with success probability $1 / 2$.  [Recall that we can generate~$G$ from a
Unif$(0, 1)$ random variable~$U$ using $G = \lceil -(\ln U) / (\ln 2) \rceil$.]
Simulating the $D$-chain backwards in time [requirement~(iii)] 
is also easy since~$D$ is a birth-and-death chain and so
is reversible.

Now consider requirement~(iv).  Suppose that a one-step transition $D_{-t+1}$ $ = D_{-t } + 1$ (say) forward in time is observed, 
and consider the conditional distribution of the driving variable $W_{-t}$ that would produce this [via $D_{-t + 1} = \psi(D_{-t}, W_{-t})$]. 
What we observe is precisely that the random variable $W_{-t}(1) = U_{-t}^{1/\beta}$ 
fed to the update function~$\psi$ must have satisfied
$W_{-t}(1) > (2/3)^{1/\beta}$, i.e.,\ $U_{-t} > 2/3$.  
Hence we simply generate $W_{-t}(1)$ from the distribution of $U_{-t}^{1/\beta}$ 
conditionally given $U_{-t} > 2 / 3$.  Similarly, if 
$D_{-t + 1} = D_{-t} - 1$ or $D_{-t + 1} = D_{-t} = x_0 - 1$, then we generate $W_{-t}(1)$ conditioned
on $W_{-t}(1) \leq (2/3)^{1/\beta}$, i.e.,\ on $U_{-t} \leq 2/3$.  The random variable $W_{-t}(2)$ is always generated independently as the $1 / \beta$ power of a Unif$(0, 1)$ random variable.

Finally, requirement~(v) is achieved by using the multigamma coupler from the last section;
indeed, if ever $W_{-t}(1) \leq 1/(D_{-t} + 1)$,
then the underlying chain coalesces to a single state at time $-t + 1$, and hence also at time~$0$. 
\smallskip 

Thus our algorithm is as follows:
\newpage
\begin{enumerate}
\item[1.] Generate $G \sim \mbox{Geom}(1 / 2)$ and let 
$D_0 \leftarrow x_0 - 2 + G$.  Set $t \leftarrow 1$.
\item[2.] Having generated $D_{- (t - 1)}$, generate 
$D_{-t}$ by moving up from $D_{- (t - 1)}$ with 
probability $1 / 3$ and ``down'' with probability $2 / 3$.  
Here, and in step~4 below, a move up adds~$1$ and a move 
``down'' subtracts~$1$ unless $D_{- (t - 1)} = x_0 - 1$, 
in which case $D_{- t} = x_0 - 1$, too.
\item[3.] Read the transition forward from 
$D_{- t}$ to $D_{-t + 1}$.
\item[4.] If the forward move is up, impute 
$U_{- t} \sim \mbox{Unif}(2 / 3, 1)$; 
if the forward move is ``down'', impute $U_{- t} \sim \mbox{Unif}(0, 2 / 3]$.  In either case, 
set $W_{- t}(1) = U^{1 / \beta}_{ - t}$.
\item[5.] If $W_{- t}(1) \leq 1 / (D_{- t} + 1)$, 
then generate $W_{- t}(2)$ independently and set $X_{- t + 1} = W_{- t}(2)$;
otherwise increment~$t$ by~$1$ and return to step~2.
\item[6.] Using the imputed driving variables $W_{-s}(1)$ ($t \geq s \geq 1$), and generating independent variables $W_{-s}(2)$ as necessary, grow the $X$-trajectory forward to time~$0$:
$$
X_{- s + 1} = \phi(X_{- s}, W_{-s}(1), W_{-s}(2)).
$$
Here~$\phi$ is the multigamma coupler described in \refP{P:coupler}.
\item[7.] Output $X_0$.
\end{enumerate}

\section{A bound on the running time}
\label{S:time}

\begin{theorem}  
\label{T:bound}
Let~$T$ be the number of steps taken backwards in time.
(This is also the number of times steps~3, 4, and 5 of the algorithm at the end of 
Section~\ref{S:algo} are executed.)  Define 
$x_0 = \lceil 2 / (1 - (2/3)^{1/\beta}) \rceil - 1$ as in
Proposition~\ref{P:dom}.  Then for any $0 < \beta < \infty$ we have
\begin{equation}
\label{bounds}
x_0^{\beta} \leq \EE\,T \leq 2(x_0 + 1)^\beta + 3
\end{equation}
and 
hence $\EE\,T = e^{\beta \ln \beta + \Theta(\beta)}$ as $\beta \to \infty$;
and
\begin{equation}
\label{eff}
\EE\,T \to 1\mbox{\rm \ as $\beta \to 0$}.
\end{equation}
\end{theorem}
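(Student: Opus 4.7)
We verify the four assertions in turn.

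For the lower bound, since $D_{-t} \geq x_0 - 1$ always and the driving variable $W_{-t}(1) = U_{-t}^{1/\beta}$ (with $U_{-t} \sim \mbox{Unif}[0,1)$) is independent of $D_{-t}$ at each step, the conditional probability of coalescence at step $t$ given all prior history is at most $x_0^{-\beta}$. Hence $T$ stochastically dominates a $\mbox{Geom}(x_0^{-\beta})$ variable, giving $\EE T \geq x_0^\beta$.

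The upper bound is the heart of the proof. Run $D$ backward from stationarity (shifted geometric with $\pi(x_0-1+k) = 2^{-(k+1)}$) and exploit the renewal structure at the reflecting barrier $x_0 - 1$. Let $\sigma_0$ be the first backward hitting time of the barrier and $\sigma_1 < \sigma_2 < \cdots$ the subsequent hits, with excursion lengths $L_k := \sigma_k - \sigma_{k-1}$ for $k \geq 1$. Standard birth--death computations give $\EE\sigma_0 = 3 \EE K = 3$ (since the mean hitting time from distance $k$ is $3k$ and $\EE K = 1$ for $K \sim \pi$), while Kac's lemma yields $\EE L_k = 1/\pi(x_0-1) = 2$. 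Define ``anchored'' coalescence attempts to be the checks at times $\sigma_{k-1}+1$; by the strong Markov property these form an i.i.d.\ Bernoulli sequence, and a brief case analysis---conditioning on whether $D_{-(\sigma_{k-1}+1)}$ equals $x_0 - 1$ or $x_0$, and using that the conditional driving uniform is supported on $(0, 2/3]$---shows the anchored success probability is at least $(x_0+1)^{-\beta}$. Letting $M$ be the index of the first successful anchored attempt, one has $T \leq \sigma_M$, so Wald's identity applied to the i.i.d.\ excursion lengths yields
\[
\EE T \leq \EE \sigma_0 + \EE M \cdot \EE L_1 \leq 3 + 2(x_0+1)^\beta.
\]

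The asymptotic as $\beta \to \infty$ follows by Taylor-expanding $(2/3)^{1/\beta} = 1 - \ln(3/2)/\beta + O(\beta^{-2})$, which gives $x_0 \sim 2\beta/\ln(3/2)$ and hence both $x_0^\beta$ and $(x_0+1)^\beta$ equal $\exp(\beta \ln \beta + \Theta(\beta))$, after which~\eqref{bounds} delivers the stated behaviour. For the limit~\eqref{eff}: as $\beta \to 0^+$, $(2/3)^{1/\beta} \to 0^+$ so $x_0 = 2$ and the right-hand side of~\eqref{bounds} remains uniformly bounded (by $5$, say, for $\beta \leq 1$); dominated convergence then gives $P(T = 1) = \EE_\pi[(D+1)^{-\beta}] \to 1$, and since $\EE[T - 1 \mid T \geq 2]$ is uniformly bounded in small $\beta$ by the Markov property together with~\eqref{bounds}, the bound $\EE T - 1 \leq P(T \geq 2) \cdot C$ forces $\EE T \to 1$. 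The main technical hurdle throughout is the upper bound, since step-wise coalescence events are correlated through the trajectory of $D$ and so $T$ is not geometric; the renewal decomposition at the reflecting barrier is the crucial device, as the strong Markov property makes the anchored attempts and the excursion lengths i.i.d.\ and opens the door to Wald.
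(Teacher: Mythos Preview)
Your lower bound and the $\beta\to\infty$ asymptotic match the paper's argument. For the upper bound, however, you take a genuinely different route. The paper introduces a potential
\[
\Phi_t=\begin{cases}0,&t\ge T,\\ D_{-t}-(x_0-1)+\tfrac23(x_0+1)^\beta,&\text{otherwise,}\end{cases}
\]
checks that $\bigl(\Phi_{t\wedge T}+\tfrac13(t\wedge T)\bigr)$ is a nonnegative supermartingale, and applies optional sampling to get $\tfrac13\EE T\le\EE\Phi_0=1+\tfrac23(x_0+1)^\beta$. Your renewal decomposition at the reflecting barrier, together with Wald's identity for the i.i.d.\ excursion lengths and the geometric number $M$ of anchored attempts, is an equally valid and arguably more transparent probabilistic argument, and it lands on exactly the same constant $2(x_0+1)^\beta+3$. (Your verification that the anchored success probability is at least $(x_0+1)^{-\beta}$ is correct: at the anchored step the forward move into $x_0-1$ is always ``down'', so the conditioned uniform lives on $(0,2/3]$ and the bound follows.) The supermartingale approach is more compact; your approach has the advantage that it immediately yields the analogous bound $3(d-x_0+1)+2(x_0+1)^\beta$ from a deterministic start $D_0=d$.

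For the limit \eqref{eff} the paper again argues differently: it writes $\EE T(\beta)=1+\sum_{t\ge1}\EE Y_t(\beta)$ with $Y_t(\beta)=\prod_{s=1}^t[1-(D_{-s}+1)^{-\beta}]$, observes that $Y_t(\beta)$ is monotone in $\beta$ on $(0,\beta_0]$, and applies dominated convergence. Your route via $P(T=1)\to1$ and uniform boundedness of $\EE[T-1\mid T\ge2]$ is correct in spirit, but the one-line justification ``by the Markov property together with \eqref{bounds}'' is incomplete: \eqref{bounds} is proved only for the stationary start, whereas conditioning on $T\ge2$ tilts the law of $D_{-1}$ by the factor $1-(D_{-1}+1)^{-\beta}$. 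You would need to add that your own renewal argument, started from a deterministic state $d$, gives $\EE T\le 3(d-x_0+1)+2(x_0+1)^\beta$, and then check that $\EE[D_{-1}\mid T\ge2]$ stays bounded as $\beta\to0$ (which follows from the geometric tails of $\pi$ and the expansion $1-(d+1)^{-\beta}\sim\beta\ln(d+1)$). With that addition your argument goes through.
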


\begin{proof}
The lower bound in~\eqref{bounds} is easy:  Since $D_t \geq x_0 - 1$, the expectation of the number of steps~$t$ needed to get $(U_{-t})^{1  /\beta} \leq 1/(D_{-t} + 1)$ is at least 
$x_0^\beta$.  We proceed to derive the upper bound.

Consider a potential function $\Phi = (\Phi_t)_{t \geq 0}$ such that $\Phi_t$
depends on $D_{-t},\ldots,D_{0}$ 
and $W_{-t},\ldots,W_{-1}$ in the following way:
\begin{equation}
\Phi_t = \left\{ \begin{array}{ll}
 0   & \text{if } t \geq T \\
D_{-t} - (x_0 - 1) + \frac{2}{3}( x_0 + 1)^\beta & \text{otherwise}.
\end{array} \right.
\end{equation}
We will show that the
process $(\Phi_{t \wedge T} + (1/3)(t \wedge T))$ 
is a supermartingale
and then apply the optional sampling theorem to derive the upper bound.  
Let $\Fc_t$
be the $\sigma$-algebra generated by $D_{-t},\ldots,D_0$ and 
$W_{-t},\ldots,W_{-1}$, so that $T$ is a stopping time with respect to 
the filtration $(\Fc_t)$.  Suppose that $T > t$.  In this case
there are two components to 
$\Phi_t - \Phi_{t-1}$.  The first component comes from the change from 
$D_{-t}$ to $D_{-t+1}$.  The second component comes from the
possibility of coalescence (which gives $T = t$) 
due to the choice of $W_{-t}(1)$.
The expected change in $\Phi$ is the sum of the expected changes 
from these two sources.

For the change in the $D$-chain we observe
\begin{eqnarray}
\lefteqn{\hspace{-.2in}\EE[D_{-t} - D_{-t+1} |\Fc_{t-1}]} \nonumber \\ 
 & = & (\sfrac{1}{3} - \sfrac{2}{3}) {\bf 1}(D_{-t+1} > x_0 - 1) + \sfrac{1}{3} {\bf 1}(D_{-t+1} = x_0 - 1). \label{E:change_in_D}
\end{eqnarray}

The expected change in~$\Phi$ due to coalescence can be bounded 
above by considering coalescence only when $D_{-t+1} = x_0 - 1$, 
in which case we have $D_{-t} \in \{x_0 - 1,x_0\}$.  
But then
coalescence occurs with probability at 
least $[1/(1 + x_0)]^{\beta}$, 
and if coalescence occurs, $\Phi$ drops from
$(2/3)(x_0 + 1)^\beta$ down to~$0$.
Therefore the expected change in~$\Phi$ from
coalescence when $D_{-t+1} = x_0 - 1$ is at most
$-(2/3)(x_0 + 1)^\beta / (x_0 + 1)^\beta = -2/3.$ 
Combining with~\eqref{E:change_in_D} yields
\begin{eqnarray*}
\label{E:change_in_D2}
\lefteqn{\EE[\Phi_t - \Phi_{t-1} | \Fc_{t-1}]} \\ 
& \leq & (\sfrac{1}{3} - \sfrac{2}{3}) {\bf 1}(D_{-t+1} > x_0 - 1) + (\sfrac{1}{3} - \sfrac{2}{3}) 
{\bf 1}(D_{-t+1} = x_0 - 1) = -\sfrac{1}{3}.
\end{eqnarray*}
So whenever $T > t$, the potential~$\Phi$ decreases by at least $1/3$ on average at each step,
and hence $(\Phi_{t \wedge T} + (1/3) (t \wedge T))$ 
is a supermartingale.  Since it is also nonnegative
the optional sampling theorem can be applied 
(see for example~\cite{durrett2005}, p.~271)
to yield
$$
\EE\,\Phi_T + \sfrac{1}{3} \EE\,T \leq \EE\,\Phi_0.
$$
Since $\Phi_T = 0$, it remains to bound $\EE\,\Phi_0$. 
Recall that $D_0 - (x_0 - 2)$ is a geometric random variable with parameter $1/2$ and
thus has mean~$2$.  Hence $\EE\,\Phi_0 = 1 + (2/3)(x_0 + 1)^\beta$, giving the desired upper bound on $\EE\,T$.

We now prove~\eqref{eff}, using notation such as $T(\beta)$ to indicate explicitly the dependence of various quantities on the value of the parameter~$\beta$.  However, notice that $x_0(\beta) = 2$ for all $0 < \beta \leq \beta_0 := \ln(3/2) / \ln 3$ and hence that the same dominating chain~$D$ may be used for all such values of~$\beta$.  For $0 < \beta \leq \beta_0$ define 
\begin{equation}
\label{Ytbeta}
Y_t(\beta) := \prod_{s = 1}^t \left[ 1 - (D_{-s} + 1)^{-\beta} \right].
\end{equation}
Conditionally given the entire $D$-process, since 
$1 - (D_{-s} + 1)^{-\beta}$ is the probability that coalescence does not
occur at the $s$th step backwards in time, 
$Y_t(\beta)$ equals the probability that 
coalescence does not occur on any of the first $t$ steps backwards in time.
Thus
$$
\PP(T(\beta) > t) = \EE\,Y_t(\beta), \quad t = 1, 2, \dots,
$$
and hence
\begin{equation}
\label{series}
\EE\,T(\beta) = 1 + \sum_{t = 1}^{\infty} \EE\,Y_t(\beta).
\end{equation}
We now need only apply the dominated convergence theorem, observing that $Y_t(\beta) \geq 0$ is increasing in $0 < \beta \leq \beta_0$ and that $1 + \sum_{t = 1}^{\infty} \EE\,Y_t(\beta_0) = \EE\,T(\beta_0) \leq 2 \cdot 3^{\beta_0} + 3 = 6 < \infty$ by~\eqref{bounds}.
\end{proof}

For any fixed value of~$\beta$, it is possible to find $\EE\,T$ to any desired precision.  
Let us sketch how this is done for the Dickman distribution, where $\beta = 1$.  First, we note for simplicity
that~$T$ in \refT{T:bound} has the same distribution as the time it takes for the dominating random walk
$D = (D_t)_{t \geq 0}$, run \emph{forward} from the stationary distribution at time~$0$, to stop, where ``stopping'' is determined as follows.
Having generated the walk~$D$ through time~$t$ and not stopped yet, let $U_t$ be an independent uniform$(0, 1)$ random
variable.  If $U_t > 2 / 3$, let~$D$ move up (and stopping is not possible); 
if $U_t \leq 2 / 3$, let~$D$ move ``down'' and stop (at time $t + 1$) if $U_t \leq 1 / (D_t + 1)$.
Adjoin to the dominating walk's state space $\{x_0 - 1, x_0, x_0 + 1, \ldots\}$ an absorbing state corresponding to stopping.
Then the question becomes: What is the expected time to absorption?  
We need only compute the expected time to absorption from each possible deterministic initial state
and then average with respect to the shifted-geometric stationary distribution for~$D$.
For finite-state chains, such calculations are straightforward (see for 
example~\cite{lawler1995}, pp.\ 24--25).  For infinite-state absorbing chains such as the one we have created here, 
some truncation is necessary to achieve lower and upper bounds.  This can be done using ideas similar to those
used in the proof of \refT{T:bound}; we omit further details. 

The upshot is that is takes an average of $6.07912690331468130722\dots$ steps to reach coalescence.  This is much closer 
to the lower bound given by \refT{T:bound} of~$5$ than to the upper bound of~$15$ .  
Our exact calculations confirm results we obtained from simulations.
Ten million trials (which took only a few minutes to run and tabulate using {\tt Mathematica} 
code that was effortless to write) gave an estimate of $6.07787$ with a standard error of $0.00184$.  
The algorithm took only a single Markov chain step about $17.4\%$ of the time; more than four steps 
about $47.6\%$ of the time; more than eight steps about $23.4\%$ of the time; and more than twenty-seven 
steps about $1.0\%$ of the time.  In the ten million trials, the largest number of steps needed was~$112$. 

Similar simulations for small values of~$\beta$ led us to conjecture, for some constant~$c$ near~$1$,  the refinement
\begin{equation}
\label{expand}
\EE\,T(\beta) = 1 + (1 + o(1))\,c\,\beta\mbox{\ as $\beta \to 0$}
\end{equation}
of~\eqref{eff}.
The expansion~\eqref{expand} (which, while not surprising, demonstrates extremely efficient perfect simulation 
of Vervaat perpetuities when~$\beta$ is  small) does in fact hold with
\begin{equation}
\label{cdef}
c = \sum_{i = 1}^{\infty} 2^{- i} \ln(i + 1) \approx 1.016.
\end{equation}
We will prove next that the right side of~\eqref{expand} provides a lower bound on $\EE\,T(\beta)$.  Our proof that it also
provides an upper bound uses the same absorbing-state approach we used to compute $\EE\,T(\beta)$ numerically in the case
$\beta = 1$ and is rather technical, so we omit it here. 

Using~\eqref{series} and~\eqref{Ytbeta} together with the inequality $e^{-x} \geq 1 - x$ for $x \geq 0$ we find
\begin{equation}
\label{DCTready}
\frac{\EE\,T(\beta) - 1}{\beta} > \EE \left[ \frac{1 - (D_{-1} + 1)^{- \beta}}{\beta} \right].
\end{equation}
Application of the dominated convergence theorem to the right side of ~\eqref{DCTready} gives the lower bound in~\eqref{expand}, where $c = \EE\,\ln(D_{-1} + 1)$ is given by the series~\eqref{cdef}.

\section{Related work}
\label{S:lit}
An unpublished early draft of this paper has been in existence for a number of years.  Perfect simulation of Vervaat perpetuities has been treated independently by Kendall and Th\"{o}nnes~\cite{KT}; their approach is quite similar (but not identical) to ours.  One main difference is their use of the multi-shift coupler of Wilson~\cite{wilson2000b} rather than the multigamma coupler.  The simulation with (in our notation) 
$\beta = 10$ reported in their Section~3.5 suggests that, unlike ours, their algorithm is reasonably fast even when~$\beta$ is large; it would be very interesting to have a bound on the expected running time of their algorithm to that effect.

The early draft of this paper considered only the Dickman distribution and used an integer-valued dominating chain~$D$ with a stationary distribution that is Poisson shifted to the right by one unit.  Luc Devroye, who attended a lecture on the topic by the first author of this paper in~2004, and his student Omar Fawzi have very recently improved this approach to such an extent that the expected number of Markov chain steps they require to simulate from the Dickman distribution is proven to be less than $2.32$; see~\cite{DF}.  It is not entirely clear that their algorithm is actually faster than ours, despite the larger average number $6.08$ of steps for ours (recall the penultimate paragraph of~\refS{S:time}), since it is possible that one step backwards in time using their equation~(1) takes considerably longer than our simple up/down choice (recall step~2 in the algorithm at the end of \refS{S:algo}).

\begin{acks}
We thank the anonymous referee for several helpful comments, most notably the suggestion that we try to compute the exact value of $\EE\,T$
in the case $\beta = 1$ of the Dickman distribution.
Research for the second author was carried out in part while affiliated with the Department of Mathematics, Duke University.
\end{acks}

\newcommand\AAP{\emph{Adv.\ Appl.\ Probab.} }
\newcommand\JAP{\emph{J.\ Appl.\ Probab.} }
\newcommand\JAMS{\emph{J. \AMS} }
\newcommand\MAMS{\emph{Memoirs \AMS} }
\newcommand\PAMS{\emph{Proc.\ \AMS} }
\newcommand\TAMS{\emph{Trans.\ \AMS} }
\newcommand\AnnMS{\emph{Ann.\ Math.\ Statist.} }
\newcommand\AnnPr{\emph{Ann.\ Probab.} }
\newcommand\CPC{\emph{Combin.\ Probab.\ Comput.} }
\newcommand\JMAA{\emph{J.\ Math.\ Anal.\ Appl.} }
\newcommand\RSA{\emph{Random Struct.\ Alg.} }
\newcommand\ZW{\emph{Z.\ Wahrsch.\ Verw.\ Gebiete} }
\newcommand\DMTCS{\jour{Discr.\ Math.\ Theor.\ Comput.\ Sci.} }

\newcommand\AMS{Amer.\ Math.\ Soc.}
\newcommand\Springer{Springer}
\newcommand\Wiley{Wiley}

\newcommand\vol{\textbf}
\newcommand\jour{\emph}
\newcommand\book{\emph}
\newcommand\inbook{\emph}
\def\no#1#2,{\unskip#2, no.~#1,} 

\newcommand\webcite[1]{\hfil\penalty0\texttt{\def~{\~{}}#1}\hfill\hfill}
\newcommand\webcitesvante{\webcite{http://www.math.uu.se/\~{}svante/papers/}}
\newcommand\arxiv[1]{\webcite{arXiv:#1}}

\def\nobibitem#1\par{}


\end{document}